\newcommand\tw[1]{{#1}}
\newtheorem{thm}{Theorem}[section]
\newtheorem{lem}[thm]{Lemma}
\theoremstyle{definition}
\numberwithin{equation}{section}
\begin{document}

\title[Equicontinuity of minimal sets for amenable group actions on dendrites]{Equicontinuity of minimal sets for\\ amenable group actions on
dendrites}

\author{Enhui Shi\ \ \&\  \ Xiangdong Ye}

\address[E.H. Shi]{School of Mathematical Sciences, Soochow University, Suzhou 215006, P. R. China}
\email{ehshi@suda.edu.cn}

\address[X. Ye]
{Wu Wen-Tsun Key Laboratory of Mathematics, USTC, Chinese Academy of Sciences and Department of Mathematics, University of Science and
Technology of China, Hefei, Anhui 230026, China}
\email{yexd@ustc.edu.cn}


\begin{abstract}
We show that if $G$ is an amenable group acting on a dendrite $X$,
then the restriction of $G$ to any  minimal set $K$ is equicontinuous, and $K$ is either finite or homeomorphic to the Cantor set.
\end{abstract}
\keywords{Equicontinuity, amenable group, minimal sets}
\subjclass[2010]{54H20, 37B25, 37B05, 37B40}
\maketitle

\section{Introduction}
It is well known that every \tw{continuous} action of
\tw{a topological group}~$G$ on \tw{a compact metric space}~$X$
must have a minimal set~$K$. A natural
question is \tw{to ask}
what can \tw{be said} about the topology of~$K$,
and the dynamics of the subsystem~$(K, G)$.
The answer
to this question \tw{certainly}
depends on the topology of~$X$ and
\tw{involves} the algebraic structure of~$G$.
\tw{We assume throughout that groups are
topological groups, and that the actions
are continuous.}

In the case of
\tw{an orientation-preserving} group action on
the circle~$\mathbb S^1$, the topology of minimal sets and the dynamics on them are well understood.
In fact, for any
action of
\tw{a topological} group~$G$ on~$\mathbb S^1$,
the minimal set~$K$ can only be a finite set,
a Cantor set, or the whole circle
(see, \tw{for example,}~\cite{Nav}).
\tw{The interaction between the topology of~$K$
and the algebraic structure of~$G$ arises as follows.}
\begin{itemize}
\item \tw{If}~$K$ is a Cantor set, then~$(K, G)$ is
\tw{semi-conjugate} to a minimal action \tw{of~$G$}
on~$\mathbb S^1$.
\item If~$K=\mathbb S^1$, then~$(K, G)$ is either
equicontinuous,
or~$(K,G)$ is~$\epsilon$-strongly proximal for
some~$\epsilon>0$, and~$G$ contains a
free non-commutative subgroup
(\tw{so, in particular},~$G$ cannot be
amenable; see \cite{Ma}).
\end{itemize}
The
classes of minimal group actions on the circle
\tw{up to topological conjugacy}
\tw{have been} classified by
Ghys using bounded Euler class (see~\cite{Ghy, Gh}).

Recently, there
\tw{has been} considerable progress
in \tw{the study of} group actions on dendrites.
Minimal group actions on dendrites
appear naturally in the theory of~$3$-dimensional
hyperbolic geometry (see,
for example,~\cite{Bo, Mi}).
Shi proved that every minimal group action
on \tw{a dendrite} is strongly proximal,
and the acting group cannot be amenable (see~\cite{Sh, SWZ}).
Based on the results obtained by
Marzougui and Naghmouchi in~\cite{MN},
Shi and Ye showed that
\tw{an} amenable
group action on
\tw{a dendrite} always
has a minimal set consisting of~$1$
or~$2$ points (see~\cite{SY}),
which is also implied by
the work of Malyutin and Duchesne--Monod
(see~\cite{Mal, DM}). For group actions on dendrites with no finite orbits, Glasner and Megrelishvili showed the
extreme proximality of minimal subsystems
and the strong proximality of the whole system; for amenable group actions on dendrites,
they showed that every infinite minimal subsystem is almost automorphic (see~\cite{GM}).
For~$\mathbb Z$ actions on dendrites,
Naghmouchi proved that every minimal set is either
finite or an adding machine (see~\cite{Nag}).

We \tw{prove} the following theorem in this
paper, which extends the corresponding
result for~$\mathbb Z$ actions in~\cite{Nag},
and
\tw{answers} a question proposed by
Glasner and Megrelishvili in~\cite{GM}.

\begin{thm}\label{theoremwas1-1}
Let~$G$ be an amenable group acting on a
dendrite~$X$,
\tw{and suppose that}~$K$ is a minimal set
\tw{for the action}.
Then~$(K, G)$ is equicontinuous,
and~$K$ is either finite or homeomorphic to the Cantor set.
\end{thm}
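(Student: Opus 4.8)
The plan is to treat the finite and infinite cases separately, the finite case being immediate since any family of maps of a finite metric space is automatically equicontinuous. So assume from now on that $K$ is infinite; minimality then forces $K$ to be perfect. Since $G$ is amenable and acts on the compact metric space $K$, I would first fix a $G$-invariant Borel probability measure $\mu$ on $K$, which by minimality has full support. Let $D=\langle K\rangle$ denote the smallest subdendrite of $X$ containing $K$ (the union of all arcs joining points of $K$); it is closed, connected, and $G$-invariant. Observe that $K\subsetneq D$, for otherwise $G$ would act minimally on the nondegenerate dendrite $D$, contradicting Shi's theorem that an amenable group admits no minimal action on a nondegenerate dendrite.

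Next I would show that $K$ is totally disconnected, so that, being compact, perfect and metrizable, it is homeomorphic to the Cantor set. Suppose some connected component $C$ of $K$ is nondegenerate. Since connected subsets of a dendrite are arc-convex, $C$ is a nondegenerate subdendrite contained in $K$, and distinct $G$-translates of $C$ are either equal or disjoint. Collapsing each component of $K$ to a point defines a monotone upper semicontinuous decomposition of $D$ whose quotient $D'$ is again a dendrite carrying an induced $G$-action, and the image $K'$ of $K$ is a totally disconnected minimal set. If $K'$ is a single point then $K=C=D$ is a dendrite, already excluded; if $K'$ is finite then $K$ has finitely many components, and the finite-index subgroup of $G$ fixing each of them is amenable and acts minimally on the nondegenerate dendrite $C$, again contradicting Shi's theorem. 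The remaining case, that $K'$ is an infinite set with all components of $K$ nondegenerate, I would rule out by a measure/packing argument exploiting the $G$-invariance and full support of $\mu$, which restrict how uncountably many disjoint nondegenerate subdendrites can be arranged. Hence every component is a singleton and $K$ is a Cantor set.

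Finally I would establish equicontinuity by exhibiting $(K,G)$ as an inverse limit of finite systems. For a cut point $w$ of $D$ and $\varepsilon>0$, call $w$ $\varepsilon$-heavy if at least two components of $D\setminus\{w\}$ meet $K$ in a set of $\mu$-measure $\ge\varepsilon$. Because $\mu$ is $G$-invariant, the set $W_\varepsilon$ of $\varepsilon$-heavy cut points is $G$-invariant; the crux of the proof is a packing argument showing that $W_\varepsilon$ is finite, the bound coming from the fact that the heavy directions organize into a finite subtree in which every vertex carries at least two branches of mass $\ge\varepsilon$, whence at most about $1/\varepsilon$ of them. Removing the finite $G$-invariant set $W_\varepsilon$ then cuts $K$ into finitely many relatively clopen pieces, yielding a $G$-invariant clopen partition $\mathcal P_\varepsilon$, so each $(K/\mathcal P_\varepsilon,G)$ is a finite, hence equicontinuous, factor. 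Since $K$ is totally disconnected, any two distinct points $x,y\in K$ are separated by a gap, i.e.\ by a cut point $w\in(x,y)\setminus K$ both of whose sides carry positive $\mu$-mass by full support; such $w$ is $\varepsilon$-heavy for small $\varepsilon$, so $\operatorname{mesh}(\mathcal P_\varepsilon)\to 0$ as $\varepsilon\to 0$. Therefore $(K,G)=\varprojlim_\varepsilon (K/\mathcal P_\varepsilon,G)$ is an inverse limit of finite systems and is equicontinuous.

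I expect the finiteness of $W_\varepsilon$ to be the main obstacle, since it is precisely here that the tree structure of the dendrite and the invariant measure must be combined quantitatively; the upper semicontinuity of the component decomposition in the total-disconnectedness step is a second delicate point. An alternative to the explicit inverse-limit construction would be to invoke the theorem of Glasner and Megrelishvili that $(K,G)$ is almost automorphic, that is, an almost one-to-one extension of its maximal equicontinuous factor $(Z,G)$, and then to prove that $(K,G)$ is distal; for almost automorphic systems distality forces the almost one-to-one factor map to be injective, so $(K,G)\cong(Z,G)$ is equicontinuous. Proving distality amounts to excluding nontrivial proximal pairs $x\ne y$ in $K$, which one would again attack through a separating gap on the arc $[x,y]$ together with the $G$-invariant measure $\mu$.
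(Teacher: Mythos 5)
Your route is genuinely different from the paper's -- the paper first proves equicontinuity directly, by taking a regionally proximal pair $u\neq v$, using the $1$- or $2$-point invariant set of Lemma~\ref{lemmawas3-2} to produce a fixed point $o$, and then running a case analysis on $[u,o]\cap[v,o]$ with cut points, Lemma~\ref{lemmawas2-9} and the invariant measure (Lemma~\ref{lemmawas2-2}); only afterwards does it deduce the Cantor structure \emph{from} equicontinuity. Your plan inverts this order and replaces the proximality argument by an inverse-limit construction, but both of the steps you yourself flag as delicate are genuinely broken or missing.

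The central claim that $W_\varepsilon$ is finite is false for the definition you give. If $K$ is a Cantor set lying in an arc $[a,b]\subseteq D$, then every point $w$ in the ``middle'' of that arc is a cut point of $D$ whose two complementary components both meet $K$ in measure $\geq\varepsilon$, so $W_\varepsilon$ contains a nondegenerate subarc and is uncountable; the bound of order $1/\varepsilon$ only counts points with at least \emph{three} heavy branches. Moreover $W_\varepsilon$ as defined can contain points of $K$ itself, so removing it does not induce a partition of $K$. The obvious repair (keep only cut points with $\geq 3$ heavy branches) does give a finite set, but then it fails to separate points of $K$ lying along an arc with no heavy branch point between them, so the mesh of $\mathcal P_\varepsilon$ need not tend to $0$. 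The construction is thus caught between ``finite but not separating'' and ``separating but infinite'', and since the entire equicontinuity proof rests on producing finite $G$-invariant clopen partitions with mesh tending to $0$, the main step of your argument is not established. (Note also that what you are aiming for -- $(K,G)$ as an inverse limit of finite systems -- is essentially the full strength of the theorem together with total disconnectedness, so it should not be expected to come cheaply.)

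The total-disconnectedness step has a second hole: the case where $K'$ is infinite is dismissed with an unspecified ``measure/packing argument,'' and I do not see one. Individual nondegenerate components may have $\mu$-measure zero, and Lemma~\ref{lemmawas2-6} only forces their diameters to tend to $0$, which contradicts nothing by itself. The paper closes exactly this case \emph{using} equicontinuity: applying equicontinuity to $g^{-1}$ gives a uniform lower bound on ${\rm diam}(gY)$, so by Lemma~\ref{lemmawas2-6} the orbit of a nondegenerate component $Y$ is finite, its stabilizer $H$ has finite index, $(Y,H)$ is minimal, and Lemmas~\ref{lemmawas2-3} and~\ref{lemmawas3-2} give the contradiction. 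Because you attempt this step before proving equicontinuity, that mechanism is unavailable to you. Your fallback via Glasner--Megrelishvili (almost automorphic and distal implies equicontinuous) is a sound reduction, but proving distality still requires excluding proximal pairs, which is precisely where the paper's real work lies; you have not supplied that argument either.
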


Recently, Shi and Ye have shown that every amenable group action on uniquely arcwise connected continua (without the assumption of local connectedness)
must have a minimal set consisting of~$1$ or~$2$
points (see~\cite{SY1}).
We end this
\tw{introduction} with the following general question:

\begin{quote}
{\it What results holding for group actions on dendrites can be extended to actions on uniquely arcwise connected continua?}
\end{quote}

In the following, we assume all the groups
\tw{appearing}
in this paper are countable.

\section{Preliminaries}
\subsection{Group actions}
Let~$X$ be a compact metric space,~${\rm Homeo}(X)$
\tw{its homeomorphism group},
\tw{and let}~$G$ be a group. A group
homomorphism~$\phi: G\rightarrow {\rm Homeo}(X)$ is called an
\emph{action} of $G$ on $X$; we
\tw{also write}~$(X, G)$ to denote
\tw{an} action of~$G$
on~$X$.
For brevity, we usually
\tw{write}~$gx$ or~$g(x)$ instead of~$\phi(g)(x)$.

The \emph{orbit}
of~$x\in X$ under the action of~$G$ is the
set
\[
Gx=\{gx\mid g\in
G\}.
\]
For a subset~$A\subseteq X$,
set~$GA=\bigcup_{x\in
A}Gx$;
\tw{a set}~$A$ is said to be~$G$-\emph{invariant}
if~$GA=A$;
\tw{finally, a point}~$x\in X$ is called a
\emph{fixed point} of
\tw{the action} if~$Gx=\{x\}$.
If~$A$ is a~$G$-invariant closed subset of~$X$
and~$\overline{Gx}=A$ for every~$x\in A$
\tw{(that is, the orbit of each point is dense)},
then~$A$ is called a \emph{minimal set for the action}.
\tw{In this setting every action has a minimal
set by Zorn's lemma.}

A Borel probability measure~$\mu$ on~$X$
is called~$G$-\emph{invariant} if~$\mu(g(A))=\mu(A)$
for every Borel set~$A\subset X$ and every~$g\in G$.
The following lemma follows directly from the~$G$-invariance of \tw{the support}~${\rm supp(\mu)}$
\tw{(which is automatic)}.

\begin{lem}\label{lemmawas2-1}
If~$(X, G)$ is minimal and~$\mu$ is a~$G$-invariant Borel probability
measure on~$X$, then~${\rm supp(\mu)}=X$.
\end{lem}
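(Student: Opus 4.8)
The plan is to use the two basic structural features of the support of a measure: that $\operatorname{supp}(\mu)$ is a nonempty closed subset of $X$, and that it is $G$-invariant. Once these are in hand, minimality finishes the argument immediately. Recall that $\operatorname{supp}(\mu)$ may be described as the set of all points $x\in X$ such that $\mu(U)>0$ for every open neighborhood $U$ of $x$; equivalently, it is the smallest closed set of full $\mu$-measure. Since $\mu$ is a probability measure (hence nonzero) on a compact metric space, its support is nonempty, and by definition it is closed.

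The one point that requires a short verification is the $G$-invariance of $\operatorname{supp}(\mu)$, and this is where the hypothesis on $\mu$ enters. First I would fix $g\in G$ and a point $x\in\operatorname{supp}(\mu)$, and show $gx\in\operatorname{supp}(\mu)$. Given any open neighborhood $V$ of $gx$, the set $g^{-1}V$ is an open neighborhood of $x$ (the action is by homeomorphisms), so $\mu(g^{-1}V)>0$ because $x$ lies in the support. The $G$-invariance of $\mu$ then gives $\mu(V)=\mu\bigl(g(g^{-1}V)\bigr)=\mu(g^{-1}V)>0$, whence $gx\in\operatorname{supp}(\mu)$. This shows $g\,\operatorname{supp}(\mu)\subseteq\operatorname{supp}(\mu)$ for every $g$, and applying the same inclusion to $g^{-1}$ yields equality, so $\operatorname{supp}(\mu)$ is $G$-invariant.

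To conclude, I would pick any point $x\in\operatorname{supp}(\mu)$, which exists by nonemptiness. Because $\operatorname{supp}(\mu)$ is closed and $G$-invariant, it contains the orbit $Gx$ and hence its closure $\overline{Gx}$. Minimality of $(X,G)$ means precisely that $\overline{Gx}=X$, so $X\subseteq\operatorname{supp}(\mu)\subseteq X$, giving $\operatorname{supp}(\mu)=X$. There is no real obstacle here; the only step deserving care is the invariance computation above, where one must track that pulling back an open set by $g$ and invoking $\mu(gA)=\mu(A)$ on the Borel set $A=g^{-1}V$ preserves positivity of measure.
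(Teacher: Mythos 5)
Your proof is correct and follows exactly the route the paper indicates: the paper states that the lemma ``follows directly from the $G$-invariance of the support,'' and you have simply filled in the standard details (nonemptiness and closedness of $\operatorname{supp}(\mu)$, the invariance computation via $\mu(V)=\mu(g^{-1}V)$, and the appeal to minimality). Nothing is missing and no different idea is involved.
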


\begin{lem}\label{lemmawas2-2}
\tw{Suppose that a group}~$G$ acts on a
compact metric space~$X$,
\tw{ and that}~$K$ is a minimal set in~$X$
\tw{carrying a}~$G$-invariant Borel probability
measure~$\mu$.
If~$U$ and~$V$ are open sets in~$X$ such that~$V\supset U$
and~$g(V\cap K)\subset U\cap K$ for some~$g\in G$,
then~$K\cap (V\setminus {\overline U})=\emptyset$.
\end{lem}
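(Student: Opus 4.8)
The plan is to combine the $G$-invariance of $\mu$ with the fact that $\mu$ has full support on $K$. Indeed, since $\mu$ is a $G$-invariant Borel probability measure carried by the minimal system $(K,G)$, applying Lemma~\ref{lemmawas2-1} to $(K,G)$ gives $\operatorname{supp}(\mu)=K$; I will use this at the very end.

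First I would set $W=V\cap K$. Because $U\subset V$ we have $U\cap K\subset W$, so the hypothesis $g(V\cap K)\subset U\cap K$ reads $g(W)\subset U\cap K\subset W$. Here $W$ is Borel (an intersection of the open set $V$ with the compact, hence closed, set $K$) and $g(W)$ is Borel since $g$ is a homeomorphism. Thus $g(W)\subset W$ with $\mu(g(W))=\mu(W)$ by invariance, and as $\mu$ is finite this forces $\mu(W\setminus g(W))=0$. Enlarging $g(W)$ to $U\cap K$ (which contains it) only shrinks the complement, so from $W\setminus(U\cap K)\subset W\setminus g(W)$ I conclude $\mu\big(W\setminus(U\cap K)\big)=0$. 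Rewriting $W\setminus(U\cap K)=(V\setminus U)\cap K$, this says precisely that the ``annulus'' $(V\setminus U)\cap K$ is $\mu$-null.

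The final step is to invoke the support. The set $V\setminus\overline U$ is open in $X$ (an open set minus a closed set) and satisfies $V\setminus\overline U\subset V\setminus U$, so $(V\setminus\overline U)\cap K$ is contained in the null set just produced and therefore also has measure zero. If $(V\setminus\overline U)\cap K$ were nonempty, then $V\setminus\overline U$ would be an open set meeting $\operatorname{supp}(\mu)=K$, and every open set meeting the support has positive measure; this contradiction yields $K\cap(V\setminus\overline U)=\emptyset$, as required.

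I expect the only delicate point — and the reason the statement quotes $\overline U$ rather than $U$ — to be this last step: positivity of $\mu$ on the support is guaranteed only for \emph{open} sets, so one must pass from the non-open set $V\setminus U$ to the open set $V\setminus\overline U$ before applying $\operatorname{supp}(\mu)=K$. Everything else is a direct bookkeeping of the inclusions $g(W)\subset U\cap K\subset W$ under the invariant measure.
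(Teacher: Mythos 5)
Your argument is correct and is essentially the paper's own proof, just run forwards instead of by contradiction: both rest on the invariance identity $\mu(g(V\cap K))=\mu(V\cap K)\leq\mu(U\cap K)$ together with Lemma~\ref{lemmawas2-1}, which forces any nonempty relatively open subset of $K$ (such as $(V\setminus\overline U)\cap K$) to have positive measure. Your remark about why one must pass to the open set $V\setminus\overline U$ before invoking the support is exactly the point the paper handles by choosing a neighborhood $W\subset V\setminus\overline U$ of a hypothetical point of $K\cap(V\setminus\overline U)$.
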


\begin{proof}
Assume to the contrary that there is
some~$u\in K\cap (V\setminus {\overline U})$.
Then there is
\tw{an} open neighborhood~$W\ni u$
with~$W\subset V\setminus {\overline U}$.
By Lemma~\ref{lemmawas2-1},
\tw{we have}~$\mu(W\cap K)>0$.
\tw{This then implies that}~$\mu(V\cap K)=\mu (g(V\cap K))\leq \mu(U\cap K)<\mu(V\cap K)$, a contradiction.
 \end{proof}

\subsection{Amenable groups}
Amenability was first introduced by
von Neumann. Recall that a
countable group $G$ is
\tw{said to be}
\emph{amenable} if there is a
sequence of finite sets~$F_i$ ($i=1, 2, 3,\ \dots$) such that
\[
\lim\limits_{i\to\infty}\frac{|gF_i\bigtriangleup F_i|}{|F_i|}=0
\]
for every~$g\in G$, where~$|F_i|$ is the
number of elements in~$F_i$.
The \tw{sequence}~$(F_i)$ is called a \tw{\emph{F{\o}lner sequence}}
\tw{and each~$F_i$ a F{\o}lner set}.
It is well known
that solvable groups and finite groups are amenable
\tw{ and that }any group containing a free
\tw{non-commutative} subgroup is not amenable.
One may consult
\tw{the monograph of Paterson}~\cite{Pa}
for the proofs of the following lemmas.

\begin{lem}\label{lemmawas2-3}
Every subgroup of an amenable group is amenable.
\end{lem}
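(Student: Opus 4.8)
The plan is to argue directly from the Følner characterization of amenability given above, manufacturing a Følner sequence for a subgroup $H\le G$ out of a Følner sequence $(F_i)$ for $G$; note that $H$ is again countable, so the same definition applies. First I would fix a transversal $T$ for the right cosets of $H$ in $G$, so that $G=\bigsqcup_{t\in T}Ht$ and each $g\in G$ is written uniquely as $g=ht$ with $h\in H$ and $t\in T$. For every $t\in T$ set $F_i^t=\{h\in H:ht\in F_i\}$, the ``$H$-slice'' of $F_i$ over the coset $Ht$ transported back into $H$ by right multiplication by $t^{-1}$. Since the cosets partition $G$ and each $F_i$ is finite, the slices are finite and $\sum_{t\in T}|F_i^t|=|F_i|$.

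The key step is an exact bookkeeping identity. For any $h_0\in H$ the bijection $h\mapsto ht$ carries $F_i^t$ onto $F_i\cap Ht$ and $h_0F_i^t$ onto $(h_0F_i)\cap Ht$ (using $h_0H=H$, so left translation by $h_0$ preserves the coset $Ht$); hence $|h_0F_i^t\bigtriangleup F_i^t|=|((h_0F_i)\bigtriangleup F_i)\cap Ht|$. Summing over $t\in T$ and using that the cosets partition $G$ gives $\sum_t|h_0F_i^t\bigtriangleup F_i^t|=|(h_0F_i)\bigtriangleup F_i|$. Thus, after dividing by $|F_i|=\sum_t|F_i^t|$, the Følner ratios of $H$ on the slices \emph{average exactly} to the Følner ratio of $G$. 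More generally, for a finite set $S=\{h_1,\dots,h_n\}\subseteq H$ one gets $\sum_t\sum_{j}|h_jF_i^t\bigtriangleup F_i^t|=\sum_j|h_jF_i\bigtriangleup F_i|$, whose value divided by $|F_i|$ tends to $0$ as $i\to\infty$.

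A pigeonhole step then selects a good slice: if $\sum_t a_t/\sum_t b_t=c$ with $a_t,b_t\ge 0$ and $a_t=0$ whenever $b_t=0$, then some $t$ satisfies $a_t\le c\,b_t$. Applying this with $a_t=\sum_j|h_jF_i^t\bigtriangleup F_i^t|$ and $b_t=|F_i^t|$ produces, for each $i$, a single finite set $E=F_i^{t}\subseteq H$ with $\sum_j|h_jE\bigtriangleup E|/|E|\le \sum_j|h_jF_i\bigtriangleup F_i|/|F_i|$.

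The only real obstacle is that this good slice a priori depends on the finite set $S$, whereas a Følner sequence must be almost invariant under \emph{every} element of $H$ simultaneously; I resolve this by a diagonal argument using the countability of $H$. Enumerate $H=\{h_1,h_2,\dots\}$. For each $k$, apply the averaging and pigeonhole steps to $S=\{h_1,\dots,h_k\}$ and an index $i$ large enough that $\sum_{j\le k}|h_jF_i\bigtriangleup F_i|/|F_i|<1/k$, obtaining a finite $E_k\subseteq H$ with $|h_jE_k\bigtriangleup E_k|/|E_k|<1/k$ for all $j\le k$. Then $(E_k)$ is a Følner sequence for $H$: for $h=h_j$ and every $k\ge j$ the ratio $|hE_k\bigtriangleup E_k|/|E_k|$ is below $1/k$, hence tends to $0$. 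This exhibits $H$ as amenable.
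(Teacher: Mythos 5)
Your proof is correct. The paper itself offers no argument for this lemma---it defers to Paterson's monograph~\cite{Pa}---so there is no in-paper proof to match; what you have written is the standard self-contained F{\o}lner-set proof, and it works entirely within the definition of amenability the paper actually adopts. The coset-slicing identity $\sum_{t}|h_0F_i^t\bigtriangleup F_i^t|=|(h_0F_i)\bigtriangleup F_i|$ is right: left translation by $h_0\in H$ preserves each right coset $Ht$, and symmetric difference commutes with intersecting against the fixed set $Ht$. The pigeonhole step is sound, with the one small point that you should select a slice with $|F_i^t|>0$; your hypothesis that $a_t=0$ whenever $b_t=0$ (true here, since an empty slice contributes nothing to the symmetric differences) is exactly what guarantees such a nonempty good slice exists. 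The final diagonalization over an enumeration of $H$ correctly upgrades almost-invariance for each finite subset to a genuine F{\o}lner sequence, and it is here that the paper's standing assumption that all groups are countable is used in an essential way---for uncountable discrete groups one would argue instead with F{\o}lner nets or push an invariant mean on $G$ down to $H$ via the same transversal, which is closer to the route taken in~\cite{Pa}.
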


\begin{lem}\label{lemmawas2-4}
A group $G$ is amenable if and only if every action of $G$ on a compact metric
space~$X$ has a~$G$-invariant Borel probability measure on~$X$.
\end{lem}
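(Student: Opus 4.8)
The plan is to prove the two implications separately, taking the Følner characterization as the working definition of amenability. Throughout I write $g_*\nu$ for the pushforward of a Borel measure $\nu$ under $g\in G$, with the convention $(g_*\nu)(B)=\nu(g^{-1}B)$, so that the $G$-invariance defined in the text, $\mu(gA)=\mu(A)$, is exactly $g_*\mu=\mu$ for all $g\in G$; I let $P(X)$ denote the space of Borel probability measures on $X$ with the weak-$*$ topology, which is compact and, since $X$ is compact metric, metrizable.

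For the forward implication, suppose $G$ is amenable with Følner sequence $(F_i)$, fix an action on a compact metric space $X$, and fix any $\nu\in P(X)$ (for instance a point mass). I would form the averages
\[
\mu_i=\frac{1}{|F_i|}\sum_{g\in F_i}g_*\nu\in P(X),
\]
and record the key estimate. Since $h_*g_*=(hg)_*$, one has $h_*\mu_i=\frac{1}{|F_i|}\sum_{g\in hF_i}g_*\nu$, so $h_*\mu_i-\mu_i$ is a signed measure whose positive part is carried by the translates indexed by $hF_i\setminus F_i$ and whose negative part by those indexed by $F_i\setminus hF_i$; hence its total variation is at most $|hF_i\bigtriangleup F_i|/|F_i|\to 0$ for every $h\in G$. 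Passing to a weak-$*$ convergent subsequence $\mu_{i_k}\to\mu$ (possible by compactness of $P(X)$) and letting $k\to\infty$ in this estimate gives $h_*\mu=\mu$ for every $h$, so $\mu$ is the desired $G$-invariant measure.

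For the reverse implication I would route the hypothesis through invariant means, the difficulty being that the natural universal object $\ell^\infty(G)=C(\beta G)$ has non-metrizable spectrum, whereas the hypothesis only applies to compact \emph{metric} systems. Recall the classical equivalence, which I would cite from~\cite{Pa}, that $G$ satisfies the Følner condition if and only if $\ell^\infty(G)$ carries a left-invariant mean, i.e.\ a state $m$ with $m(L_gf)=m(f)$ for all $g\in G$ and $f\in\ell^\infty(G)$, where $L_gf(x)=f(g^{-1}x)$. To construct such a mean I would, for each finite set $\mathcal F\subset\ell^\infty(G)$, take $\mathcal A_{\mathcal F}$ to be the smallest closed unital $G$-invariant subalgebra containing $\mathcal F$; because $G$ is countable and $\mathcal F$ is finite, $\mathcal A_{\mathcal F}$ is separable, so its Gelfand spectrum $Y_{\mathcal F}$ is a compact metric $G$-space. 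The hypothesis then supplies a $G$-invariant $\mu_{\mathcal F}\in P(Y_{\mathcal F})$, hence an invariant mean on $\mathcal A_{\mathcal F}$, which I extend by Hahn--Banach to a state $m_{\mathcal F}$ on $\ell^\infty(G)$ satisfying $m_{\mathcal F}(L_gf)=m_{\mathcal F}(f)$ for all $f\in\mathcal F$ and $g\in G$.

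Finally I would pass to a weak-$*$ cluster point $m$ of the net $(m_{\mathcal F})$ indexed by the finite subsets of $\ell^\infty(G)$ ordered by inclusion, which exists since the state space is weak-$*$ compact. For any fixed $f$ and $g$ the identity $m_{\mathcal F}(L_gf)=m_{\mathcal F}(f)$ holds for every $\mathcal F\ni f$, and positivity together with $m(1)=1$ are weak-$*$ closed conditions, so $m$ is a left-invariant mean on all of $\ell^\infty(G)$ and $G$ is amenable. I expect the genuine obstacle to lie in this reverse direction: the failure of metrizability for $\beta G$ forces the argument through separable invariant subalgebras and a patching limit, and the conclusion ultimately rests on the nontrivial classical fact that an invariant mean forces the Følner condition.
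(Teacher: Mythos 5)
The paper gives no internal proof of this lemma to compare against: it is stated with a pointer to Paterson's monograph \cite{Pa}, so your proposal is supplying an argument where the paper supplies only a citation. What you wrote is a correct, essentially standard proof, and it is structured sensibly. The forward direction is the classical Day/Krylov--Bogolyubov averaging: the total variation bound $\|h_*\mu_i-\mu_i\|\le |hF_i\bigtriangleup F_i|/|F_i|$ is right, and since $P(X)$ is weak-$*$ compact and metrizable (as $X$ is compact metric) while each $h_*$ is weak-$*$ continuous, passing to a convergent subsequence does yield an invariant measure; this half is fully self-contained given the F{\o}lner definition the paper adopts. In the reverse direction you correctly identify the genuine obstruction --- the lemma quantifies only over compact \emph{metric} spaces, while the universal object $\beta G$ is non-metrizable --- and your remedy (separable $G$-invariant unital subalgebras $\mathcal A_{\mathcal F}$, whose Gelfand spectra are compact metrizable $G$-spaces since $G$ is countable and discrete, followed by a compactness/patching limit in the state space) is the standard and correct one. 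Two small points deserve tightening: you should take $\mathcal A_{\mathcal F}$ closed under complex conjugation (a $C^*$-subalgebra), since otherwise the Gelfand identification $\mathcal A_{\mathcal F}\cong C(Y_{\mathcal F})$ and the metrizability-from-separability step are not available as stated; and the Hahn--Banach extension is a state not for free but because a norm-one extension with $m_{\mathcal F}(1)=1$ on a unital $C^*$-algebra is automatically positive --- worth a sentence. Finally, your proof is not self-contained at its deepest point: the implication from an invariant mean back to the F{\o}lner condition (Day's convexity argument plus Namioka's trick) is still cited from \cite{Pa}; given that the paper cites \cite{Pa} for the entire lemma, this is a fair division of labor, but you should state explicitly that this is where the real analytic content lies.
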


\subsection{Dendrites}
A \emph{continuum} is a
\tw{non-empty} connected compact metric space. A
continuum is \tw{said to be} \emph{non-degenerate}
if it is not a single point.  An
\emph{arc} is a continuum which is homeomorphic to the closed
interval~$[0, 1]$.
A continuum~$X$ is \emph{uniquely arcwise
connected} if for any two points~$x\not=y\in X$ there is a unique
arc~$[x, y]$ in~$X$ \tw{connecting}~$x$ and~$y$.

A \emph{dendrite}~$X$ is
a locally connected, uniquely arcwise connected, continuum.
If~$Y$ is a subcontinuum of
\tw{a dendrite}~$X$, then~$Y$ is
called a \emph{subdendrite} of~$X$.
For a dendrite~$X$ and a point~$c\in X$,
if~$X\setminus\{c\}$ is not connected,
then $c$ is called a \emph{cut point} of~$X$;
if~$X\setminus\{c\}$ has at least~$3$ components,
then~$c$ is called a \emph{branch point} of~$X$.

\tw{Lemmas~\ref{lemmawas2-5}
to~\ref{lemmawas2-8} are
taken} from~\cite{Na}.

\begin{lem}\label{lemmawas2-5}
Let~$X$ be a dendrite with metric~$d$.
Then, for every~$\epsilon>0$, there is a~$\delta>0$
such that~${\rm diam}([x, y])<\epsilon$ whenever~$d(x, y)<\delta$.
\end{lem}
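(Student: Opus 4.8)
The plan is to argue by contradiction, using the compactness of $X$ to reduce the uniform statement to a separation property of a single arc. First I would negate the conclusion: suppose there is some $\epsilon>0$ for which no $\delta$ works. Then for each $n$ I can pick $x_n,y_n\in X$ with $d(x_n,y_n)<1/n$ but $\operatorname{diam}([x_n,y_n])\geq\epsilon$. Since each arc $[x_n,y_n]$ is compact, its diameter is attained, so I may choose $a_n,b_n\in[x_n,y_n]$ with $d(a_n,b_n)\geq\epsilon$. Passing to a subsequence and using compactness of $X$, I assume $x_n\to x$, $y_n\to y$, $a_n\to a$, $b_n\to b$. From $d(x_n,y_n)\to0$ I get $x=y$, while $d(a_n,b_n)\geq\epsilon$ forces $d(a,b)\geq\epsilon$, so in particular $a\neq b$.

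Next I would exploit the tree-like structure of a dendrite to manufacture a cut point that must lie on $[x_n,y_n]$ for all large $n$, yet cannot. Since $a\neq b$, the arc $[a,b]$ is nondegenerate, so its interior contains infinitely many points; I choose one such interior point $m$ with $m\neq x$. As an interior point of $[a,b]$, the point $m$ separates $a$ from $b$, so if $A$ and $B$ denote the components of $X\setminus\{m\}$ containing $a$ and $b$ respectively, then $A\neq B$; let $C$ be the component containing $x$ (using $m\neq x$). Because $X$ is locally connected, the components $A,B,C$ are open, so from $a_n\to a$, $b_n\to b$, and $x_n,y_n\to x$ I conclude that $a_n\in A$, $b_n\in B$, and $x_n,y_n\in C$ for all large $n$.

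The contradiction then comes from the defining property of arcs in a dendrite, which I expect to be the crux: an interior point of an arc separates its two endpoints, and conversely two points lying in distinct components of $X\setminus\{m\}$ have $m$ on the unique arc joining them. On one hand, $a_n\in A$ and $b_n\in B$ with $A\neq B$ force $m\in[a_n,b_n]\subseteq[x_n,y_n]$. On the other hand, $x_n$ and $y_n$ lie in the single component $C$ of $X\setminus\{m\}$, so $m$ does not separate them and $m\notin[x_n,y_n]$; this is the desired contradiction. The genuinely delicate point is not this final separation step (a standard structural fact about dendrites that I would take from the cited source), but rather arranging a \emph{single} cut point $m$ that simultaneously witnesses the contradiction for all large $n$: this is exactly what the choice $m\neq x=\lim x_n=\lim y_n$ together with the openness of components from local connectedness accomplishes, converting the metric convergence of endpoints into membership in one fixed component.
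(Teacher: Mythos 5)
Your proof is correct, but note that the paper itself offers no proof of this lemma: it is one of the statements quoted directly from Nadler's \emph{Continuum Theory}~\cite{Na}, so there is no in-paper argument to compare against. Your compactness-and-contradiction argument is a legitimate self-contained proof. The two structural inputs you invoke are exactly the standard ones: that components of $X\setminus\{m\}$ are open (essentially Lemma~\ref{lemmawas2-8} of the paper), and that a point interior to an arc separates its endpoints in $X$ --- equivalently, that the arc $[p,q]$ is contained in every connected subset of $X$ containing $p$ and $q$, which follows from unique arcwise connectedness together with the fact that connected subsets of a dendrite are arcwise connected. You also correctly identify and handle the one delicate point: choosing a \emph{single} separating point $m\in (a,b)$ with $m\neq x$, so that openness of the components $A$, $B$, $C$ converts the convergences $a_n\to a$, $b_n\to b$, $x_n,y_n\to x$ into membership for all large $n$, yielding $m\in [a_n,b_n]\subseteq [x_n,y_n]$ while $[x_n,y_n]\subseteq C$ forces $m\notin [x_n,y_n]$. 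For comparison, the usual textbook route is direct rather than by contradiction: a compact, connected, locally connected metric space is uniformly locally connected, so for $\epsilon>0$ there is $\delta>0$ such that any $x,y$ with $d(x,y)<\delta$ lie in a connected set $C$ of diameter less than $\epsilon$; since $[x,y]\subseteq C$ by the same structural fact, $\operatorname{diam}([x,y])<\epsilon$. Both arguments pivot on the containment of $[x,y]$ in connected sets joining $x$ to $y$; the direct route buys brevity, while yours avoids quoting uniform local connectedness by extracting the needed uniformity from sequential compactness.
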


\begin{lem}\label{lemmawas2-6}
Let~$X$ be a dendrite.
If~$A_i\ (i=1, 2, 3, \dots)$ is a
sequence of mutually disjoint sub-dendrites of~$X$,
then~${\rm diam}(A_i)\rightarrow 0$ as~$i\rightarrow\infty$.
\end{lem}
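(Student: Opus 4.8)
The plan is to argue by contradiction, reducing everything to a ``tripod'' decomposition of an arc in a dendrite together with the uniform-continuity statement of Lemma~\ref{lemmawas2-5}. Suppose the conclusion fails. Then there are an $\epsilon>0$ and, after passing to a subsequence (which I relabel), mutually disjoint subdendrites $A_i$ with ${\rm diam}(A_i)\geq\epsilon$ for all $i$. Since each $A_i$ is compact its diameter is attained, so I may choose $x_i,y_i\in A_i$ with $d(x_i,y_i)={\rm diam}(A_i)\geq\epsilon$; as $A_i$ is a subdendrite (hence a connected, uniquely arcwise connected subcontinuum) it contains the arc $[x_i,y_i]$. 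Passing to a further subsequence I may assume $x_i\to x$ and $y_i\to y$, and then $d(x,y)\geq\epsilon$, so $x\neq y$. Along the arc $[x,y]$ the map $t\mapsto d(x,t)$ runs continuously from $0$ to $d(x,y)\geq\epsilon$, so I can fix a point $z\in[x,y]$ with $d(x,z)=\epsilon/2$; the triangle inequality then gives $d(y,z)\geq\epsilon/2$ as well.

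The heart of the argument is to show that this single point $z$ lies in $[x_i,y_i]\subseteq A_i$ for all large $i$, which immediately contradicts disjointness. The first ingredient is the tree decomposition $[x,y]\subseteq[x,x_i]\cup[x_i,y_i]\cup[y_i,y]$: the right-hand side is a connected set containing both $x$ and $y$, and in a uniquely arcwise connected continuum any connected set joining two points contains the unique arc between them. The second ingredient is Lemma~\ref{lemmawas2-5} applied with $\epsilon/4$, yielding a $\delta>0$ with ${\rm diam}([a,b])<\epsilon/4$ whenever $d(a,b)<\delta$. For $i$ large we have $d(x_i,x)<\delta$ and $d(y_i,y)<\delta$, hence ${\rm diam}([x,x_i])<\epsilon/4$ and ${\rm diam}([y,y_i])<\epsilon/4$. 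Since $d(x,z)=\epsilon/2$ and $d(y,z)\geq\epsilon/2$, the point $z$ can lie in neither $[x,x_i]$ nor $[y,y_i]$, so by the decomposition it must lie in $[x_i,y_i]\subseteq A_i$. As this holds for all large $i$ and the $A_i$ are mutually disjoint, $z$ would belong to infinitely many disjoint sets, a contradiction; therefore ${\rm diam}(A_i)\to 0$.

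I expect the main obstacle to be cleanly justifying the inclusions $[x_i,y_i]\subseteq A_i$ and the tree decomposition $[x,y]\subseteq[x,x_i]\cup[x_i,y_i]\cup[y_i,y]$, i.e.\ the assertion that a connected subset of a dendrite containing two points contains the arc joining them. This is exactly where the dendrite hypothesis is genuinely used: for an interior point $w$ of an arc $[a,b]$, $w$ separates $a$ from $b$ in $X$, so if a connected set $C$ contained $a,b$ but omitted $w$ it would be split by the two components of $X\setminus\{w\}$, which is impossible. Once this is in place the proof is short, since Lemma~\ref{lemmawas2-5} does the remaining work by shrinking the two ``end'' arcs $[x,x_i]$ and $[y,y_i]$ below diameter $\epsilon/4$ and thereby trapping the fixed interior point $z$ on the middle arc.
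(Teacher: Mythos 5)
Your proof is correct, and it is worth noting that the paper itself gives no argument here: Lemmas~\ref{lemmawas2-5}--\ref{lemmawas2-8} are simply quoted from Nadler's \emph{Continuum Theory}~\cite{Na}, where this statement is usually obtained from the fact that dendrites are hereditarily locally connected and that in such continua every sequence of pairwise disjoint subcontinua is a null family. Your route is more elementary and self-contained: the tripod decomposition $[x,y]\subseteq[x,x_i]\cup[x_i,y_i]\cup[y_i,y]$, the trapping of the fixed point $z$ on the middle arc via Lemma~\ref{lemmawas2-5}, and the inclusion $[x_i,y_i]\subseteq A_i$ are all sound, and your justification of the key fact --- that a connected subset of a dendrite containing $a$ and $b$ contains $[a,b]$, because each $w\in(a,b)$ separates $a$ from $b$ --- is exactly right for dendrites (it needs that components of $X\setminus\{w\}$ are open and arcwise connected, which follows from local connectedness, consistent with Lemma~\ref{lemmawas2-8}). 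One small caution: your passing remark that this holds in \emph{any} uniquely arcwise connected continuum is false in general --- in the Warsaw circle, an interior point $w$ of the arc along the sine strand does not separate its endpoints, and $X\setminus\{w\}$ is a connected set containing both endpoints but not the arc --- so local connectedness is genuinely needed, as your final paragraph in fact acknowledges by invoking the dendrite hypothesis. Modulo tightening that one sentence, your argument is a complete and arguably preferable replacement for the external citation, since it uses only Lemma~\ref{lemmawas2-5} and the cut-point structure of dendrites already developed in the paper.
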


\begin{lem}\label{lemmawas2-7}
Let~$X$ be a dendrite. Then~$X$ has
at most countably many branch points.
If~$X$ is nondegenerate,
then the cut point set of~$X$ is uncountable.
\end{lem}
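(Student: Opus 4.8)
The statement consists of two independent assertions, which I would establish by entirely different means: the countability of the branch points by a counting argument against a countable dense set, and the uncountability of the cut point set by producing uncountably many cut points inside a single arc.

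For the first assertion, the plan is to fix a countable dense subset $D=\{d_1,d_2,\dots\}\subseteq X$ and to realize every branch point as the \emph{center} of a triple of points of $D$. Here the center of three points $x,y,z$ means the unique point lying simultaneously on all three arcs $[x,y]$, $[y,z]$, and $[x,z]$; its existence and uniqueness follow from the tree-like structure of a dendrite, since the intersection of two arcs sharing an endpoint is again a subarc. Now let $b$ be a branch point, so that $X\setminus\{b\}$ has at least three components. As $X$ is locally connected, $X\setminus\{b\}$ is open and locally connected, so its components are open and therefore each meets the dense set $D$. Choosing $d_i,d_j,d_k\in D$ in three distinct components, any arc joining two of them must pass through $b$, because a connected set meeting two components of $X\setminus\{b\}$ cannot avoid $b$; hence $b\in[d_i,d_j]\cap[d_j,d_k]\cap[d_i,d_k]$, and by uniqueness of the center, $b$ is exactly that center. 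Thus every branch point is the center of some triple from $D$, and as there are only countably many such triples, there are at most countably many branch points.

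For the second assertion, assume $X$ is nondegenerate and fix two distinct points $a$ and $b$ together with the arc $[a,b]$. I claim that every interior point $c$ of $[a,b]$ is a cut point of $X$. If instead $a$ and $b$ lay in a single component $C$ of $X\setminus\{c\}$, then $C$, being an open connected subset of the dendrite, would be arcwise connected, yielding an arc from $a$ to $b$ inside $X\setminus\{c\}$; this contradicts the uniqueness of $[a,b]$, which passes through $c$. Hence $a$ and $b$ lie in distinct components of $X\setminus\{c\}$, so $c$ is a cut point. Since the arc $[a,b]$ is homeomorphic to $[0,1]$, its interior is uncountable, and therefore the cut point set of $X$ is uncountable.

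The routine parts of both arguments conceal a single delicate structural fact, on which I expect to spend the most care: that an open connected subset of a dendrite is arcwise connected, so that one may pass freely between being connected and being joined by an arc. This underlies both the step that replaces a connected set by an actual arc in the second assertion and the existence and uniqueness of the center in the first; both ultimately rest on the local connectedness and unique arcwise connectedness built into the definition of a dendrite.
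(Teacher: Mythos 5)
Your proof is correct, but there is nothing in the paper to compare it against in detail: the paper proves nothing here, as Lemmas~\ref{lemmawas2-5}--\ref{lemmawas2-8} are simply quoted from Nadler's \emph{Continuum theory}~\cite{Na}. Your argument is essentially the classical one from that literature, and both halves are sound. For the countability of branch points, the median-point scheme works: each branch point $b$ has at least three components in its complement, these are open (local connectedness) and hence meet a countable dense set $D$, every arc joining points of distinct components passes through $b$, and the uniqueness of the center forces $b$ to be the center of a triple from $D$; since triples are countable, so are branch points. The only step you compress is the existence and uniqueness of the center itself, and your stated reduction is the right one: $[x,y]\cap[x,z]$ is closed, contains with any two points the arc between them (by uniqueness of arcs), hence is a subarc $[x,m]$, and one checks $m\in[y,z]$ and that the triple intersection is the single point $\{m\}$. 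For the uncountability of cut points, showing that every interior point $c$ of a fixed arc $[a,b]$ cuts $X$ is exactly right, and your identification of the load-bearing fact is accurate: you need that a connected open subset of a Peano continuum is arcwise connected (this is Theorem~8.26 in~\cite{Na}; alternatively one can use the stronger dendrite-specific fact that \emph{every} connected subset of a dendrite is arcwise connected). Note that the paper's Lemma~\ref{lemmawas2-8} alone would not substitute for this, since knowing $\overline{C}=C\cup\{c\}$ does not by itself produce an arc from $a$ to $b$ inside $C$. As a bonus, your second half proves slightly more than stated, namely that every non-endpoint of every arc in $X$ is a cut point.
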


\begin{lem}\label{lemmawas2-8}
Let~$X$ be a dendrite and~$c\in X$. Then each component~$U$
of~$X\setminus\{c\}$ is open in~$X$,
and~$\overline U=U\cup \{c\}$.
\end{lem}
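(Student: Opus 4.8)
The plan is to derive both assertions from two standard features of the ambient space, namely local connectedness and the connectedness of $X$ as a whole; notably the unique arcwise connectivity plays no role here, so the argument works for any locally connected continuum. First I would record that $\{c\}$ is closed in the metric space $X$, so that $X\setminus\{c\}$ is open. The openness of $U$ then follows from the general fact that in a locally connected space the connected components of an open set are themselves open: given $x\in U$, local connectedness furnishes a connected open neighborhood $V$ of $x$ contained in the open set $X\setminus\{c\}$, and since $V$ is connected and meets the component $U$ it must lie inside $U$. Thus $U$ is a neighborhood of each of its points and hence open.

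For the second assertion I would split the equality $\overline U=U\cup\{c\}$ into its two inclusions. For $\overline U\subseteq U\cup\{c\}$, I would invoke that a connected component of any space is closed in that space; hence $U$ is closed in $X\setminus\{c\}$, which means $\overline U\cap(X\setminus\{c\})=U$. Equivalently, the only point of $X$ lying in $\overline U$ but not in $U$ can be $c$, which gives the inclusion $\overline U\subseteq U\cup\{c\}$.

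For the reverse inclusion $U\cup\{c\}\subseteq\overline U$, the only nontrivial point to verify is $c\in\overline U$. Here I would argue by contradiction: if $c\notin\overline U$, then the inclusion just established forces $\overline U=U$, so $U$ would be simultaneously open (by the first part) and closed in $X$. Since $X$ is connected and $U$ is nonempty, this forces $U=X$, contradicting $c\notin U$. Therefore $c\in\overline U$, and combining the two inclusions yields $\overline U=U\cup\{c\}$.

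Because each step invokes only elementary point-set topology, I do not expect a serious obstacle; the two places that genuinely use the hypotheses are the appeal to local connectedness to obtain openness of $U$ and the appeal to the connectedness of the whole dendrite to force $c$ into $\overline U$. The main care required is simply to keep straight which inclusion each fact supplies and to note that the degenerate case $X=\{c\}$ is vacuous, since then $X\setminus\{c\}$ has no components.
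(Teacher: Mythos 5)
Your proof is correct. Note that the paper does not prove this lemma at all --- it is one of Lemmas~\ref{lemmawas2-5}--\ref{lemmawas2-8}, which are quoted from Nadler's \emph{Continuum Theory}~\cite{Na} --- so there is no in-paper argument to compare against; what you give is the standard textbook proof, and each step checks out: openness of $U$ from the fact that components of open subsets of a locally connected space are open; the inclusion $\overline U\subseteq U\cup\{c\}$ from the closedness of components in the subspace $X\setminus\{c\}$; and $c\in\overline U$ from the connectedness of $X$, since otherwise the nonempty set $U$ would be clopen and hence all of $X$. Your side remarks are also accurate: unique arcwise connectedness plays no role, so the lemma holds for any locally connected continuum (indeed for any connected, locally connected space with closed points), and the degenerate case $X=\{c\}$ is vacuous because $X\setminus\{c\}$ then has no components.
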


Now we give a proof of the following technical lemma.

\begin{lem}\label{lemmawas2-9}
Let~$X$ be a dendrite and let~$f:X\rightarrow X$ be a
homeomorphism. Suppose~$o$ is a fixed point of~$f$,
\tw{and let}~$c_1, c_2$ be cut points of~$X$
different from~$o$.
Suppose \tw{that}~$U$ is a component of~$X\setminus \{c_1\}$
not \tw{containing}~$o$,
\tw{that}~$V$ is a component
of~$X\setminus \{c_2\}$
not \tw{containing}~$o$,
\tw{and that}~$f(c_1)\in V$. Then~$f(U)\subset V$.
\end{lem}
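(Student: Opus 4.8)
The plan is to reduce everything to the behaviour of arcs under $f$, exploiting that $f$ fixes $o$. First I would record the characterization of components that I will use repeatedly: for a cut point $c$ and points $a,b\in X\setminus\{c\}$, the points $a$ and $b$ lie in the same component of $X\setminus\{c\}$ if and only if $c\notin[a,b]$. This is immediate from unique arcwise connectedness together with Lemma~\ref{lemmawas2-8} (if $c\notin[a,b]$ the connected set $[a,b]$ lies in one component; if $c\in[a,b]$ then any arc in $X\setminus\{c\}$ joining $a$ and $b$ would contradict uniqueness of $[a,b]$). In particular, since $U$ is the component of $X\setminus\{c_1\}$ not containing $o$, every $x\in U$ satisfies $c_1\in[o,x]$; and since $V$ is the component of $X\setminus\{c_2\}$ not containing $o$ and $f(c_1)\in V$, we have $c_2\in[o,f(c_1)]$.

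Next I would fix an arbitrary $x\in U$ and transport the arc $[o,x]$ by $f$. Because $f$ is a homeomorphism of the dendrite it carries $[o,x]$ onto the unique arc joining $f(o)=o$ and $f(x)$, so $f([o,x])=[o,f(x)]$. As $c_1\in[o,x]$, this yields $f(c_1)\in[o,f(x)]$. Combining this with $c_2\in[o,f(c_1)]$ and the arc-monotonicity property (if $p\in[a,b]$ then $[a,p]\subseteq[a,b]$), I obtain $c_2\in[o,f(c_1)]\subseteq[o,f(x)]$, so that the three points $c_2,f(c_1),f(x)$ all lie on the single arc $[o,f(x)]$, with $c_2$ between $o$ and $f(c_1)$, and $f(c_1)$ between $o$ and $f(x)$.

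From this ordering the conclusion follows. Since $c_2\in[o,f(c_1)]$ and $f(c_1)\in[o,f(x)]$, the point $f(c_1)$ lies on the subarc $[c_2,f(x)]$, whence $[c_2,f(x)]=[c_2,f(c_1)]\cup[f(c_1),f(x)]$ and $c_2\notin[f(c_1),f(x)]$ (here I use $f(c_1)\neq c_2$, which holds because $f(c_1)\in V$ while $c_2\notin V$ by Lemma~\ref{lemmawas2-8}). By the component characterization, $f(x)$ and $f(c_1)$ lie in the same component of $X\setminus\{c_2\}$, namely $V$. As $x\in U$ was arbitrary, $f(U)\subseteq V$.

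The only genuinely delicate point is this last step. Knowing merely that $c_2\in[o,f(x)]$ would tell me only that $f(x)$ avoids the component of $o$, and when $c_2$ is a branch point there may be several components of $X\setminus\{c_2\}$ not containing $o$. The role of the hypothesis $f(c_1)\in V$, propagated along the arc $[o,f(x)]$, is precisely to pin down which of these components $f(x)$ falls into; the ordering argument on $[o,f(x)]$ is what makes this rigorous. Everything else is a routine application of the dendrite facts recorded in Section~2.
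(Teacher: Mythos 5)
Your proof is correct and rests on the same two facts as the paper's: that $f$ carries $[o,x]$ to $[o,f(x)]$ because $f(o)=o$, and that membership in a component of $X\setminus\{c\}$ is detected by whether $c$ lies on the arc to a reference point. The paper merely packages this as a contradiction argument running through $f^{-1}$ (showing that a point $u\in U$ with $f(u)\notin V$ would force $f^{-1}(c_2)\in[o,c_1]\cap[u,c_1]=\{c_1\}$, hence $f(c_1)=c_2$), whereas you argue directly by locating $c_2$ on the subarc $[o,f(c_1)]$ of $[o,f(x)]$; the content is the same.
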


\begin{proof}
Assume to the contrary that there is
some~$u\in U$ with~$f(u)\notin V$.
Since~$c_2$ is a cut point,~$f(c_1)\in
V$,~$o\notin V$, and~$f(o)=o$,
we have~$c_2\in [f(o), f(c_1)]$
and~$c_2\in [f(u), f(c_1)]$.
This implies
\tw{that}~$f^{-1}(c_2)\in [o, c_1]\cap [u, c_1]=\{c_1\}$
since~$o\notin U$.
Thus~$f(c_1)=c_2$, which contradicts
\tw{the assumption that}~$f(c_1)\in V$.
\end{proof}

If~$[a, b]$ is an arc in a dendrite~$X$,
denote by~$[a, b)$,~$(a,b]$,
and~$(a, b)$ the
sets~$[a,b]\setminus\{b\}$,~$[a,b]\setminus\{a\}$,
and~$[a,b]\setminus\{a, b\}$, respectively.

\subsection{Equicontinuity}

Let~$X$ be a compact metric space with
metric~$d$, and let~$G$ be a group
acting on~$X$. Two points~$x, y\in X$
are said to be \emph{regionally proximal}
if there are sequences~$(x_i)$,~$(y_i)$
in~$X$
and~$(g_i)$ in~$G$
such that~$x_i\rightarrow x$
\tw{and}~$y_i\rightarrow y$
as~$i\rightarrow\infty$,
and~$\lim g_ix_i=\lim g_iy_i=w$
for some~$w\in X$.
If~$x,y$ are regionally proximal
and~$x\not=y$, then $\{x, y\}$
\tw{is} said to be a \emph{non-trivial regionally proximal pair}.
The action~$(X, G)$ is \emph{equicontinuous}
if, for every~$\epsilon>0$, there is a~$\delta>0$ such
that~$d(gx, gy)<\epsilon$ \tw{for all~$g\in G$}
whenever~$d(x, y)<\delta$.

The following lemma can be \tw{found} in~\cite{Au}.

\begin{lem}\label{lemmawas2-10}
Suppose~$(X,G)$ is a group action.
Then~$(X,G)$ is equicontinuous if and only if
it contains no non-trivial regionally proximal pair.
\end{lem}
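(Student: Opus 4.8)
The plan is to prove both implications by contraposition, and in each case to exploit that the set of maps $\{g : g\in G\}$ is closed under inversion (because $G$ is a group), so that the family $\{g^{-1} : g\in G\}$ coincides with $\{g : g\in G\}$ as a set of self-maps of $X$; in particular, whenever the action is equicontinuous, the family of inverses $\{g^{-1}\}$ is equicontinuous as well. This symmetry is what makes both directions work.

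For the forward implication, I would assume $(X,G)$ is equicontinuous and suppose, for contradiction, that $\{x,y\}$ is a non-trivial regionally proximal pair, witnessed by $x_i\to x$, $y_i\to y$, and $g_i\in G$ with $g_ix_i\to w$ and $g_iy_i\to w$. Put $r=d(x,y)>0$. Since $g_ix_i$ and $g_iy_i$ both converge to $w$, we have $d(g_ix_i, g_iy_i)\to 0$. Applying the equicontinuity of the inverse family $\{g^{-1}\}$ with $\epsilon=r/2$ produces a $\delta>0$ for which $d(g_ix_i, g_iy_i)<\delta$ forces $d(x_i,y_i)=d\bigl(g_i^{-1}(g_ix_i), g_i^{-1}(g_iy_i)\bigr)<r/2$. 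But $d(x_i,y_i)\to d(x,y)=r$, so this fails for all large $i$, a contradiction.

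For the backward implication, I would prove the contrapositive: assuming $(X,G)$ is not equicontinuous, I construct a non-trivial regionally proximal pair. Failure of equicontinuity supplies an $\epsilon_0>0$ and, for each $i$, points $a_i,b_i\in X$ and $g_i\in G$ with $d(a_i,b_i)<1/i$ but $d(g_ia_i, g_ib_i)\ge \epsilon_0$. Using compactness of $X$, I pass to a subsequence along which $a_i\to p$ (so also $b_i\to p$, since $d(a_i,b_i)\to 0$), together with $g_ia_i\to u$ and $g_ib_i\to v$; then $d(u,v)=\lim d(g_ia_i,g_ib_i)\ge\epsilon_0$, so $u\ne v$. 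Setting $x_i=g_ia_i\to u$, $y_i=g_ib_i\to v$, and $h_i=g_i^{-1}\in G$, I obtain $h_ix_i=a_i\to p$ and $h_iy_i=b_i\to p$, which is exactly the statement that $\{u,v\}$ is a non-trivial regionally proximal pair with common limit $w=p$.

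The main obstacle is the backward direction, and the delicate point there is that the negation of equicontinuity yields only inputs that are close together whose images are driven apart, so a regionally proximal pair cannot be read off directly from $a_i,b_i$; one must first apply the homeomorphisms $g_i$ to separate the images into the distinct limits $u,v$, and only afterwards use the inverses $g_i^{-1}\in G$ to recover the common limit $p$. Both directions thus rest essentially on the group structure — the availability of $g_i^{-1}$ inside $G$ — together with the compactness of $X$ used to extract convergent subsequences.
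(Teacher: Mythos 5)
Your proposal is correct, and both directions are complete. A point of comparison: the paper gives no proof of this lemma at all --- it simply cites Auslander's book \cite{Au}, where the result is established in the language of uniform structures and the regionally proximal relation. Your argument is the standard metric-space version of that proof, so nothing is lost by the blind reconstruction. The two places where care is needed are exactly the ones you flag: in the forward direction, the hypothesis $d(g_ix_i,g_iy_i)\to 0$ must be pulled back through $g_i^{-1}$, which is legitimate only because $G$ is a group, so that equicontinuity of $\{g:g\in G\}$ is literally equicontinuity of $\{g^{-1}:g\in G\}$; and in the backward direction, the pair $(a_i,b_i)$ produced by the failure of equicontinuity is \emph{not} itself the regionally proximal pair --- one must relabel, taking $x_i=g_ia_i\to u$, $y_i=g_ib_i\to v$ with $d(u,v)\ge\epsilon_0$, and then apply $h_i=g_i^{-1}$ to recover the common limit $w=p$. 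Your compactness bookkeeping (extracting one subsequence along which $a_i$, $g_ia_i$, and $g_ib_i$ all converge, with $b_i\to p$ forced by $d(a_i,b_i)\to 0$) is sound, and the identity $d(u,v)=\lim d(g_ia_i,g_ib_i)\ge\epsilon_0$ correctly guarantees non-triviality. Your closing observation is also apt: the argument genuinely uses invertibility, which is consistent with the fact that the paper states the lemma for group actions rather than semigroup actions.
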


\section{Proof of the main theorem}
In this section we are going to show our main result. Before doing this we state two simple lemmas.

\begin{lem}\label{lemmawas3-1}
Suppose a group~$G$ acts on the closed interval~$[0, 1]$.
If~$K\subset [0, 1]$ is minimal, then~$K$ contains at most~$2$ points.
\end{lem}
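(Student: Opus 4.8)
The plan is to exploit the rigidity of homeomorphisms of the interval together with the compactness of $K$. The key structural fact is that every homeomorphism of $[0,1]$ is strictly monotone: being continuous and injective on an interval, each $g\in G$ is either strictly increasing or strictly decreasing. This severely constrains how $g$ can move the extreme points of an invariant set, and it is the engine of the whole argument.

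First I would set $a=\min K$ and $b=\max K$, which exist because $K$ is a nonempty compact subset of $[0,1]$. Since $K$ is minimal it is in particular $G$-invariant, so $g(K)=K$ for every $g\in G$ (here I use that $g^{-1}\in G$ as well, so $gK\subseteq K$ forces $gK=K$). Next I would examine the action of a single $g$ on $\{a,b\}$ according to its monotonicity. If $g$ is increasing, then $g$ preserves infima and suprema of subsets, so $g(a)=\inf g(K)=\inf K=a$ and likewise $g(b)=b$; if $g$ is decreasing, then $g$ interchanges infima and suprema, giving $g(a)=\sup K=b$ and $g(b)=a$. In either case $g(\{a,b\})=\{a,b\}$, so the two-point set $\{a,b\}$ is a nonempty closed $G$-invariant subset of $K$. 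By minimality of $K$, any such subset must equal $K$; hence $K=\{a,b\}$, which has at most two points (exactly one when $a=b$).

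The argument is short, and the only point requiring care — what I would flag as the main obstacle — is the orientation-reversing case: such $g$ do not fix the endpoints of $K$ individually but swap them, so the invariant object is the \emph{unordered} pair $\{a,b\}$ rather than either extreme point separately. Keeping track of this is precisely what makes $\{a,b\}$, and not a singleton such as $\{a\}$, the natural $G$-invariant set to feed into minimality, and it is what allows the bound to land at two points rather than one.
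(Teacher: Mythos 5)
Your proof is correct and follows essentially the same route as the paper: both take the extreme points $\{\inf K,\sup K\}$, observe that this two-point set is $G$-invariant (the paper leaves the monotonicity case analysis implicit), and conclude by minimality. Your write-up simply spells out the orientation-preserving/reversing dichotomy that the paper takes for granted.
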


\begin{proof}
Let~$x=\inf{K}$ and~$y=\sup{K}$.
Then~$G$ preserves the set~$\{x, y\}$,
so~$K=\{x,y\}$ by the minimality of~$K$.
\end{proof}

\begin{lem}[\tw{See}~\cite{SY}]\label{lemmawas3-2}
Let~$G$ be an amenable group acting
on a dendrite~$X$. Then there is a~$G$-invariant
set consisting of~$1$ or~$2$ points.
\end{lem}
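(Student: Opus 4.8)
The plan is to feed amenability into a measure-theoretic centroid construction on the dendrite. By Lemma~\ref{lemmawas2-4} the action admits a $G$-invariant Borel probability measure $\mu$ on $X$. Call a point $p\in X$ \emph{balanced} if $\mu(C)\le 1/2$ for every component $C$ of $X\setminus\{p\}$, and let $B$ be the set of balanced points. Because each $g\in G$ is a homeomorphism carrying the components of $X\setminus\{p\}$ bijectively onto those of $X\setminus\{gp\}$, and $\mu$ is $G$-invariant, $B$ is $G$-invariant, and hence so is its closure $\overline B$. I will show that $B\neq\emptyset$ and that $\overline B$ is a single point or an arc. In the first case $\overline B$ is a $G$-fixed point; in the second case each $g\in G$ maps the arc $\overline B=[a,b]$ homeomorphically onto itself and therefore permutes its two endpoints, so $\{a,b\}$ is $G$-invariant. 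Either way we obtain a $G$-invariant set of $1$ or $2$ points.

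For nonemptiness I argue as follows. If $p$ is not balanced, then exactly one component of $X\setminus\{p\}$ has measure greater than $1/2$ (two disjoint open sets cannot both exceed $1/2$); call it $H(p)$, so that $\overline{H(p)}=H(p)\cup\{p\}$ is by Lemma~\ref{lemmawas2-8} a subdendrite of measure greater than $1/2$. Any two such closed sets meet, since their measures sum to more than $1$; and a dendrite has the Helly property for subdendrites (given three of them, pick one point from each pairwise intersection and take the median of the three, using unique arcwise connectedness). With compactness this yields a common point $q\in\bigcap_{p}\overline{H(p)}$, the intersection taken over all unbalanced $p$. I then claim $q$ is balanced. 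If not, note that $q$ is an endpoint of the subdendrite $\overline{H(q)}$, since $\overline{H(q)}\setminus\{q\}=H(q)$ is connected; choosing $p\in H(q)$ close to $q$, the component of $X\setminus\{p\}$ containing $q$ shrinks down to $X\setminus H(q)$, whose measure is $1-\mu(H(q))<1/2$, so for $p$ near $q$ this component is not the heavy one, whence $H(p)$ avoids $q$ and $q\notin\overline{H(p)}$, a contradiction. \textbf{This limiting step is the main obstacle}: it requires controlling the measure of the shrinking $q$-side and using the endpoint structure of $\overline{H(q)}$ so that these sides genuinely decrease to empty interior.

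The structural claim rests on one inequality. Suppose $o\in X$ and $p\in B$ with $p\neq o$ lies in a component $C$ of $X\setminus\{o\}$. Every component of $X\setminus\{p\}$ other than the one, say $D$, containing $o$ is contained in $C$; these far components together with $\{p\}$ are disjoint subsets of $C$ of total measure $1-\mu(D)$, so balance of $p$ gives $\mu(C)\ge 1-\mu(D)\ge 1/2$. Hence if $B$ met three distinct components of $X\setminus\{o\}$ we would get three disjoint sets each of measure at least $1/2$, forcing total mass exceeding $1$, which is impossible; thus $B$, and hence $\overline B$, meets at most two components of $X\setminus\{o\}$ for every $o$. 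A similar separation argument shows $B$ is convex (if $p,q\in B$ and $r\in(p,q)$, every component of $X\setminus\{r\}$ lies in the $\le 1/2$-side of $p$ or of $q$, so $r\in B$), hence $\overline B$ is connected, i.e.\ a subdendrite. In a subdendrite distinct components of $\overline B\setminus\{o\}$ lie in distinct components of $X\setminus\{o\}$, so the bound above says no point of $\overline B$ is a branch point of $\overline B$; a subdendrite with no branch point is a single point or an arc.

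Combining the two halves, $\overline B$ is a nonempty $G$-invariant subdendrite that is either a point or an arc, and extracting the point or the pair of endpoints produces the required $G$-invariant set of $1$ or $2$ points. The combinatorial inputs (the Helly/median property, the endpoint structure of $\overline{H(q)}$, and the separation lemma used for convexity) are elementary consequences of unique arcwise connectedness together with Lemma~\ref{lemmawas2-8}; the only delicate point in the whole argument is the continuity estimate establishing $B\neq\emptyset$.
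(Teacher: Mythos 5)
Your argument is essentially correct, but it is not the route this paper takes: the paper does not prove Lemma~\ref{lemmawas3-2} at all, citing it from~\cite{SY}, where it is derived from Marzougui and Naghmouchi's analysis of minimal sets for group actions on dendrites~\cite{MN}. What you have written is instead a self-contained ``median of an invariant measure'' argument --- take the invariant measure supplied by Lemma~\ref{lemmawas2-4}, show the set $B$ of points all of whose complementary components have mass at most $1/2$ is nonempty, $G$-invariant, convex, and branch-free, hence a point or an arc --- which is in substance the approach of Duchesne--Monod and Malyutin that the introduction explicitly names as an alternative derivation. Your version buys something real: it isolates the stronger statement that \emph{any} action on a dendrite preserving a Borel probability measure has an invariant set of $1$ or $2$ points, with amenability entering only through Lemma~\ref{lemmawas2-4}, and it avoids importing the minimal-set machinery of~\cite{MN}. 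The steps you flag as delicate do go through, but two points should be tightened in a full write-up: (i) in the nonemptiness argument you should let $p$ tend to $q$ along a fixed arc $[q,y]\subset\overline{H(q)}$, so that the sets $X\setminus D_q(p)$ increase monotonically; their union is exactly $H(q)$ because $q$, being an endpoint of $\overline{H(q)}$, forces every arc $[q,x]$ with $x\in H(q)$ to share a nondegenerate initial segment with $[q,y]$, and then continuity of $\mu$ from below gives $\mu(D_q(p))<1/2$ eventually; and (ii) at that point you must add the one-line case distinction that if such a $p$ is balanced you are done outright, while if it is unbalanced then $H(p)\neq D_q(p)$ and so $q\notin\overline{H(p)}$, contradicting the choice of $q$. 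With those two clarifications the proof is complete.
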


Now we are ready to prove the main result.

\begin{proof}[Proof of Theorem~\ref{theoremwas1-1}] We first show that~$(K,G)$ is equicontinuous.

Assume to the contrary that~$(K, G)$ is not equicontinuous.
Then \tw{by}
Lemma~\ref{lemmawas2-10},
there are~$u\not=v\in K$ such that~$u,v$
are regionally proximal; that is,
there are sequences~$(u_i), (v_i)$
in~$X$ and~$(g_i)$ in~$G$ with
\begin{equation}\label{equationwas3-1}
u_i\rightarrow u,  v_i\rightarrow v,\ \lim g_ix_i=\lim g_iy_i=w
\end{equation}
\tw{as~$i\to\infty$}
for some~$w\in K$.

\tw{By} Lemma~\ref{lemmawas3-2},
there are~$o_1, o_2\in X$ such that~$\{o_1, o_2\}$ is
a~$G$-invariant set. Then~$[o_1, o_2]$ is~$G$-invariant
by the \tw{unique} arcwise
connectedness of~$X$. From the assumption,~$K$ is infinite
so~$K\cap [o_1, o_2]=\emptyset$ by Lemma~\ref{lemmawas3-1}.
Without loss of generality, we may
suppose \tw{that}~$o_1=o_2$
\tw{and denote this common point by}~$o$; otherwise, we need only collapse~$[o_1, o_2]$ to one point.
Then~$o$ is a fixed point \tw{for the action}.

\medskip
\noindent
{\bf Case 1.}  $[u, o]\cap [v, o]=\{o\}$ (see Fig.1(1)).
By Lemma~\ref{lemmawas2-7},
we can \tw{choose} cut points~$c_1\in (u, o)$
and~$c_2\in (v, o)$. Let~$D_u$ be the component
of~$X\setminus \{c_1\}$, which contains~$u$;
let~$D_v$ be the
component of~$X\setminus \{c_2\}$, which contains~$v$.
From minimality and Lemma~\ref{lemmawas2-8},
there is some~$g'\in G$ with~$g'w\in D_u$.
From~\eqref{equationwas3-1}
and Lemma~\ref{lemmawas2-5},
we have
\begin{equation}\label{equationwas3-2}
u_i\in D_u, v_i\in D_v \ {\mbox {and}}\ g'g_i[u_i, v_i]\subset D_u
\end{equation}
\tw{for large enough~$i$.}
Write~$g=g'g_i$.
Then~$o\in [u_i, v_i]$ and~$g(o)\in D_u$.
This is a contradiction, since~$o$ is fixed by~$G$.

\medskip
\noindent {\bf Case 2.} $[u, o]\cap [v, o]=[z, o]$
for some~$z\not=o$.

\medskip

\noindent {\bf Subcase 2.1.} $z=v$ (see Fig.1(2)).
Then~$u\not=z$ and~$z\in K$.
Take a cut point~$c_1\in (u, z)$
\tw{and let}~$D_u$ be the
component of~$X\setminus \{c_1\}$
which contains~$u$.
Then~$v\notin D_u$,
and there is some~$g\in G$
with~$gz\in D_u$ by the minimality of~$K$.
Take a cut point~$c_2\in (z, o)$ which is sufficiently
close to~$z$
\tw{to ensure} that~$g(c_2)\in D_u$.
Let~$D_z$ be the component
of~$X\setminus \{c_2\}$ which contains~$z$.
By Lemma~\ref{lemmawas2-4},
there is
a~$G$-invariant Borel probability measure on~$K$.
Applying Lemma~\ref{lemmawas2-9},
we get~$g(D_z)\subset D_u$,
\tw{which} contradicts Lemma~\ref{lemmawas2-2},
since~$z\in D_z\setminus {\overline D_u}$.

\medskip

\noindent {\bf Subcase 2.2.} $z=u$. \tw{In this case
we can deduce a contradiction
along the lines of the} argument in Subcase~2.1.

\medskip

\noindent {\bf Subcase 2.3.} $z\not=u$
and~$z\not=v$ (see Fig.1(3)).
Take a cut point~$c_1\in (u, z)$. Let~$D_u$
be the component of~$X\setminus \{c_1\}$,
which contains~$u$.
Similar to the argument in Case~1,
there is some~$g\in G$
with~$g(z)\in D_u$.
Take a cut point~$c_2\in (z, o)$
which is sufficiently close to~$z$
\tw{to ensure} that~$g(c_2)\in D_u$.
Let~$D_z$
be the component of~$X\setminus \{c_2\}$,
which contains~$z$. Then~$g(D_z)\subset D_u$ by
Lemma~\ref{lemmawas2-9}.
This contradicts Lemma~\ref{lemmawas2-2}
since~$v\in D_z\setminus {\overline D_u}$.

\medskip

Now we prove that if~$K$ is not finite,
then~$K$ is homeomorphic to the Cantor set.
\tw{If not, then} there is some
non-degenerate connected
component~$Y$ of~$K$.
Clearly, for any~$g, g'\in G$,
either~$g(Y)=g'(Y)$ or~$g(Y)\cap g'(Y)=\emptyset$.
This, together with Lemma~\ref{lemmawas2-6}
and the equicontinuity of~$(K, G)$,
implies that the subgroup~$H=\{g\in G: g(Y)=Y\}$
has finite index in~$G$.
\tw{It follows that}~$(Y, H)$ is minimal.
This contradicts Lemma~\ref{lemmawas3-2}
and Lemma~\ref{lemmawas2-3}, since~$Y$
is a non-degenerate dendrite.
\end{proof}

\begin{figure}[htbp]
\centering
\includegraphics[scale=0.8]{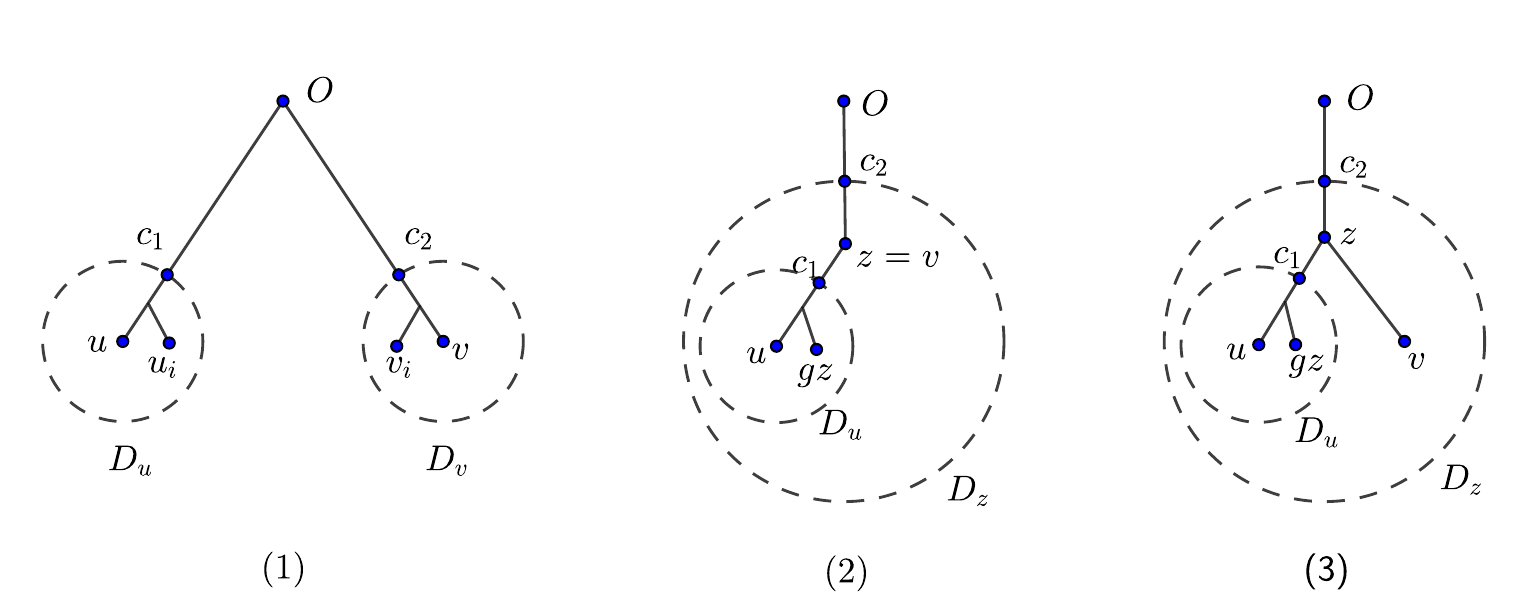}
\centerline{Fig. 1}
\end{figure}

\subsection*{Acknowledgements}
The authors would like to thank Eli Glasner for sending us the early version of his work with  Megrelishvili.
The work is supported by NSFC (No. 11771318, 11790274, 11431012).


\end{document}